\theoremstyle{plain}
\newtheorem{theorem}{Theorem}[section]
\newtheorem{lemma}[theorem]{Lemma}
\theoremstyle{definition}
\theoremstyle{remark}
\newtheorem{remark}[theorem]{Remark}
\numberwithin{equation}{section} 
\numberwithin{figure}{section}   
\newcommand{\vect}[1]{\vec{\mathbf{#1}}}
\renewcommand{\vect}[1]{\mathbf{#1}}
\newcommand{\bfe}{\vect{e}}
\newcommand{\bu}{\vect{u}}
\newcommand{\bv}{\vect{v}}
\newcommand{\bx}{\vect{x}}
\newcommand{\by}{\vect{y}}
\newcommand{\be}{\vect{e}}
\newcommand{\field}[1]{\mathbb{#1}}
\newcommand{\nR}{\field{R}}
\newcommand{\cC}{\mathcal C}
\newcommand{\cD}{\mathcal D}
\newcommand{\cG}{\mathcal G}
\newcommand{\lp}{\left(}
\newcommand{\rp}{\right)}
\renewcommand{\hat}{\widehat}
\newcommand{\xh}{\widehat{\mathbf{x}}}
\newcommand{\vh}{\widehat{\mathbf{v}}}
\newcommand{\abs}[1]{\left\lvert#1\right\rvert}
\newcommand{\set}[1]{\left\{#1\right\}}
\newcommand{\norm}[1]{\left\|#1\right\|}
\newcounter{my_counter}
\renewcommand{\div}{\mathsf{d}}
\let\latexsection\section
\renewcommand{\section}{\@ifstar {\FloatBarrier \latexsection*} {\FloatBarrier\latexsection}}
\let\latexsubsection\subsection
\renewcommand{\subsection}{\@ifstar {\FloatBarrier \latexsubsection*} {\FloatBarrier\latexsubsection}}
\newcommand{\addresseshere}{%
  \enddoc@text\let\enddoc@text\relax
}
\renewcommand{\xh}{\hat{\bx}}
\title[Explicit $L^q$ Convergence Rates]{A Note on Explicit Convergence Rates of Nonlocal Peridynamic Operators in $L^q$-Norm}
\author{Adam Larios}
\address[Adam Larios]{Department of Mathematics, 
                University of Nebraska--Lincoln,
        Lincoln, NE 68588-0130, USA}
\email[Adam Larios]{alarios@unl.edu}
\author{Isabel Safarik}
\address[Isabel Safarik]{Department of Mathematics, 
                University of Nebraska--Lincoln,
        Lincoln, NE 68588-0130, USA}
\email[Isabel Safarik]{isabel.safarik@huskers.unl.edu}
\date{\today}
\keywords{Nonlocal Peridynamic Operators, $L^q$ Convergence, Hardy-Littlewood Maximal Function,  Multiscale Modeling, Nonlocal-to-Local Convergence Rates}
\thanks{MSC 2010 Classification: 45P05,35R09,45A05,42B25,46E35
}
\begin{document}

\begin{abstract}
This note investigates the explicit convergence rates of nonlocal peridynamic operators to their classical (local) counterparts in $L^q$-norm. Previous results used Fourier series and hence were restricted to showing convergence in $L^2$.  Moreover, convergence rates were not explicit due to the use of the Lebesgue Dominated Convergence Theorem.  Some previous results have also used the Taylor Remainder Theorem in differential form, but this often required an assumption of bounded fifth-order derivatives.  We do not use these tools, but instead use the Hardy-Littlewood Maximal function, and combine it with the integral form of the Taylor Remainder Theorem. 
This approach allows us to establish convergence in the $L^q$-norm ($1 \leq q \leq \infty$) for nonlocal peridynamic partial derivatives, which immediately yields convergence rates for the corresponding nonlocal peridynamic divergence, gradient, and curl operators to their local counterparts as the radius (a.k.a., ``horizon'') of the nonlocal interaction $\delta \to 0$.  Moreover, we obtain an explicit rate of order $\mathcal{O}(\delta^2)$.
This result contributes to the understanding of the relationship between nonlocal and local models, which is essential for applications in multiscale modeling and simulations.
\end{abstract}

\maketitle

\section{Introduction}
\noindent In both engineering and mathematical contexts, ``peridynamic'' operators, i.e., nonlocal derivative-like operators with short-range interaction kernels, have generated significant interest since their introduction in the context of elasticity theory in the celebrated work \cite{silling2000reformulation}. Current research has expanded their use to a wide spectrum of applications, including diffusion \cite{Ignat_Rossi_2007_nlDiffusion, Oterkus_Madenci_Agwai_2014_nlDiffusion, Seleson_Gunzburger_Parks_2013_nlDiffusion}, 
image analysis \cite{Gilboa_Osher_2008_image, Gilboa_Osher_2007_nlImage},
biology \cite{Carrillo_Eftimie_Hoffmann_2015_nlBio, Painter_Bloomfield_Sherrat_Gerish_2015_nlBio}, 
nonlocal mechanics \cite{Bobaru_Duangpanya_2012_JCP, Bobaru_Duangpanya_2010_IJHMT, Hu_Ha_Bobaru_2012_nlMech, Aguiar_Patriota_2023_nlMech, Silling_Zimmermann_Abeyaratne_2003_nlMech}, 
and mathematical and computational analysis \cite{Silling_Lehoucq_2008, Chen_Gunzburger_2011_nlFEM, Jafarzadeh_Mousavi_Larios_Bobaru_2022, Jafarzadeh_Larios_Bobaru_2019, Du_Gunzburger_Lehoucq_Zhou_2012_nlDiff, Du_Ju_Tian_Zhou_2013, DElia_Gunzburger_2013, Gunzburger_Lehoucq_2010}. 
In contrast to classical local models, nonlocal peridynamic models use integral operators which not only demand less smoothness of solutions, but also capture interactions across multiple scales. 

Recently, the asymptotic comparison of nonlocal models with their local counterparts as the radius of nonlocal interaction shrinks to zero, has gained significant attention.
This nonlocal-to-local convergence of solutions has been proven in many cases, including 
the Cahn–Hilliard Equations \cite{Davoli_Scarpa_Trussardi_2021}, 
the elasticity equation \cite{Erbay_Erkip_2021}, 
the linear Navier equation \cite{Mengesha_Du_2014, Mengesha_Du_2014_bondBased}, 
diffusion equations \cite{Tian_Du_2013}, 
and other more general nonlocal problems (see, e.g., \cite{Du_Xie_Yin_2022, Foss_Radu_2019, Tian_Du_2014_asymptotic_compatible} and the references therein). In this analysis, the use of convergence of the nonlocal operators themselves to the local operators is often exploited.  Such operator-convergence results have been shown in many works, e.g., \cite{Du_Gunzburger_Lehoucq_Zhou_2013_MMMAS,DU_GUNZBURGER_LEHOUCQ_ZHOU_KUN_2013_JOE,FOSS_RADU_YU_2021,Foss_Radu_2019,DElia_Flores_Li_Radu_Yu_2020_Helmholtz}; however, these results are aimed at specialized operators, or are restricted to $L^2$ spaces, or without showing an explicit convergence rate.  
For example,  \cite{DU_GUNZBURGER_LEHOUCQ_ZHOU_KUN_2013_JOE} 
shows convergence of a peridynamic Navier operator in the $H^{-1}$ space, 
\cite{FOSS_RADU_YU_2021} shows the rate of convergence of a peridynamic divergence operator, but only in $L^2$,
\cite{Foss_Radu_2019} provides the convergence of a nonlocal deformation gradient, and
\cite{DElia_Flores_Li_Radu_Yu_2020_Helmholtz}  
shows pointwise convergence of a ``curl-of-curl''-type peridynamic operator with an implied, but not explicit, convergence rate.

Perhaps the most highly cited work in this direct is the pioneering work 
\cite{Du_Gunzburger_Lehoucq_Zhou_2013_MMMAS}, where nonlocal-to-local convergence of operators was shown by way of Fourier transform arguments and the Parseval identity, and hence was restricted to the $L^2$ case. Moreover, the convergence relied on Lebesgue Dominated Convergence Theorem, and therefore no explicit convergence rate was given. One may find an explicit convergence rate in \cite{HAAR_RADU_2022}, however this result is restricted for kernels of the form $\boldsymbol{\alpha}(x_1, \dots, x_n) = (\alpha_1(x_1), \dots, \alpha_n(x_n))$ and input functions of one dimension, and only considered the $L^\infty$ case. Hence, in this short note, we provide explicit nonlocal-to-local convergence rates for standard peridynamic operators in $L^q$ norm ($1\leq q\leq \infty$) in arbitrary dimension.  Our main result is the following.

\begin{theorem}\label{thm:div_conv}
Under the hypotheses of Lemma \ref{thm:convergence_rate},
\begin{align}\notag 
    & \norm{\cD_\omega(\bu) - \nabla\cdot \bu}_{L^q} \leq C\delta^2\norm{\bu}_{W^{3,q}} 
    \\ & \notag 
    \norm{\cG_\omega(u) - \nabla u}_{L^q} \leq C\delta^2\norm{u}_{W^{3,q}}
    \\ & \notag 
    \norm{\cC_\omega(\bu) - \nabla \times \bu}_{L^q} \leq C\delta^2\norm{\bu}_{W^{3,q}}
\end{align}
\end{theorem}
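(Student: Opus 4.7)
The plan is to reduce each of the three inequalities to the convergence estimate for a single nonlocal peridynamic partial derivative provided by Lemma~\ref{thm:convergence_rate}. The key observation is that the peridynamic divergence, gradient, and curl operators should be built coordinatewise out of peridynamic partial-derivative operators $\partial_i^{\omega}$ in the same way that $\nabla\cdot$, $\nabla$, and $\nabla\times$ are built out of $\partial_i$. Concretely, I expect identities of the form
\begin{align*}
\cD_\omega(\bu) &= \sum_{i=1}^n \partial_i^{\omega} u_i, \\
\cG_\omega(u) &= \bigl(\partial_1^{\omega} u,\, \dots,\, \partial_n^{\omega} u\bigr), \\
\cC_\omega(\bu) &= \bigl(\partial_2^{\omega} u_3 - \partial_3^{\omega} u_2,\ \partial_3^{\omega} u_1 - \partial_1^{\omega} u_3,\ \partial_1^{\omega} u_2 - \partial_2^{\omega} u_1\bigr).
\end{align*}
Subtracting the classical local counterparts and regrouping termwise reduces every entry to a difference $\partial_i^{\omega} v - \partial_i v$ with $v$ equal to $u$ or to a scalar component $u_j$.

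Once this decomposition is in hand, the proof is a one-line triangle-inequality argument. First I would write, for the divergence case,
\[
\cD_\omega(\bu) - \nabla\cdot \bu \;=\; \sum_{i=1}^n \bigl(\partial_i^{\omega} u_i - \partial_i u_i\bigr),
\]
then take the $L^q$ norm, apply the triangle inequality, and invoke Lemma~\ref{thm:convergence_rate} to bound each summand by $C\delta^2 \|u_i\|_{W^{3,q}}$. Since $\|u_i\|_{W^{3,q}} \leq \|\bu\|_{W^{3,q}}$, summing over $i=1,\dots,n$ absorbs the (finite) dimension into the constant $C$. The gradient estimate is even simpler---only a single scalar field is involved---and the curl estimate follows identically, with at most six summands rather than $n$.

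The only step that requires genuine care is the first: confirming that the vector peridynamic operators as defined in the paper really do decompose into scalar peridynamic partial derivatives built from the same kernel $\omega$. If $\cD_\omega$, $\cG_\omega$, $\cC_\omega$ are defined through a single integral against a radial kernel (as is standard in the peridynamics literature) rather than literally componentwise, one must invoke the odd symmetry of the kernel under $\by \mapsto -\by$, which annihilates the zeroth- and second-order terms of the Taylor expansion of $u$ and thereby identifies each component of the integrand with a $\partial_i^{\omega}$ expression. After this algebraic reduction, no further analysis is required, because all the analytic work---the integral-form Taylor remainder and the Hardy--Littlewood maximal-function bound---has already been carried out inside Lemma~\ref{thm:convergence_rate}.
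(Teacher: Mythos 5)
Your proposal is correct and is essentially the paper's own argument: the paper proves Theorem \ref{thm:div_conv} in one line by noting that $\cD_\omega$, $\cG_\omega$, and $\cC_\omega$ decompose componentwise into the nonlocal partial-derivative operators $\div_j$ of \eqref{div_components}, then applying Lemma \ref{thm:convergence_rate} to each component and summing, exactly as you describe. Your only extra caution (invoking kernel antisymmetry to justify the decomposition) is unnecessary, since the decomposition is purely algebraic once $\cD_\omega$, $\cG_\omega$, $\cC_\omega$ are written in the convolutional form \eqref{divSimplified}; the symmetry arguments live entirely inside the lemma's proof.
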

We prove this result in Section \ref{sec:rates_of_convergence}.

A common tool used in the analysis of nonlocal operators is the Taylor remainder theorem. 
While the typical approach often employs the differential form of the remainder, a central innovation in the present work lies in adopting the integral form of the remainder. This strategic choice enables us to leverage the Hardy-Littlewood Maximal function, resulting in the derivation of explicit bounds. 

In the present work, we have chosen an antisymmetric, integrable kernel function $\omega$. This is because when modeling a nonlocal gradient, many authors follow the convention used in \cite{Du_Gunzburger_Lehoucq_Zhou_2013_MMMAS}, see e.g., \cite{Jafarzadeh_Larios_Bobaru_2019,Jafarzadeh_Mousavi_Larios_Bobaru_2022,Jafarzadeh_Wang_Larios_Bobaru_2020_diffusion, Bobaru_Duangpanya_2012_JCP, Bobaru_Duangpanya_2010_IJHMT, HAAR_RADU_2022}. In certain cases, such as the study of nonlocal conservation laws, one may require a symmetric kernel, see e.g., \cite{Du_Huang_LeFloch_2017}, or even a singular kernel, see e.g., \cite{Bellido_Cueto_MoraCorral_2023_singular_kernel_nlGradients} and the references therein.

\section{Notation and Preliminaries}\label{sec:notation}
We use notation which is standard in e.g., \cite{Bobaru_Duangpanya_2012_JCP, Bobaru_Duangpanya_2010_IJHMT, Bobaru_Larios_Zhao_2021_flow}. Given two points $\bx$, $\xh\in\nR^n$ with $\bx\neq\xh$, denote
\begin{align}\notag
v:=v(\bx,t),
\quad
\bv:=\bv(\bx,t),
\quad
\hat{v}:=v(\xh,t),
\quad
\vh:=\bv(\xh,t),
\quad
\bfe:=\frac{\xh-\bx}{|\xh-\bx|}.
\end{align}
where $|\cdot|$ denotes the standard Euclidean norm; i.e., $|\bx|:=(\sum_{i=1}^n(x_i)^2)^{1/2}$ for any $\bx=(x_1,\ldots,x_n)$.
Let $\delta>0$ be a given horizon size, and define 
\begin{align}\notag
    H_{\bx}^\delta = \set{\xh\in\nR^n:|\xh-\bx|<\delta}.
\end{align}  
The volume of $H_{\bx}^\delta$ is $\alpha_n\delta^n$, where $\alpha_n$ is the volume of the unit ball in $\nR^n$.
Let $\omega: \nR^n \rightarrow[0,\infty)$ be a given non-negative, scalar valued function.  
Denote the nonlocal ($\omega$-weighted) divergence by
\begin{align}\label{nl_div}
    \cD_\omega(\bv)
    :=
    \int_{H_{\bx}^\delta}\omega(\xh-\bx) (\vh-\bv)\cdot\bfe\,d\xh,
\end{align}
the nonlocal gradient by
\begin{align}\notag 
    \cG_\omega(v)
    :=
    \int_{H_{\bx}^\delta}\omega(\xh-\bx) (\hat{v}-v)\bfe\,d\xh,
\end{align}
and the nonlocal curl:
\begin{align}\notag 
    \cC_\omega(\bv)
    := 
    \int_{H_{\bx}^\delta}\omega(\xh-\bx) (\vh-\bv)\times\bfe\,d\xh,
\end{align}

\noindent We will assume that $\omega$ is given by
\begin{align}\label{omega_form}
\omega(\bx) = 
\begin{cases}
\frac{\omega_0}{|\bx|^p}&\text{ for } 0<|\bx|<\delta,\\
0&\text{otherwise}.
\end{cases}
\end{align}
where $0<p<n$ and $\omega_0>0$ is determined by any necessary normalization criteria, e.g., \eqref{second_moment_nD}. To satisfy \eqref{second_moment_nD}, we must choose
\begin{align}\label{omega_0}
    \omega_0 = \frac{n-p+1}{\alpha_n \delta^{n-p+1}}.
\end{align}
where $\alpha_n$ is the volume of the unit ball in $\nR^n$.

In the interest of translating between different notations used in nonlocal theory, we observe that the nonlocal divergence operator we have defined \eqref{nl_div} is equivalent to the $\omega$-weighted divergence operator in \cite{Du_Gunzburger_Lehoucq_Zhou_2013_MMMAS} (see Corollary 5.2). In fact, to relate the notation in this work to the well-known notation of \cite{Du_Gunzburger_Lehoucq_Zhou_2013_MMMAS}: 
\begin{align} \notag 
    & \vect{\bm{\alpha}}_D(\bx, \by) = \be \\ \notag 
    & \phi_D(\bx) = \frac{\omega_0}{\abs{\bx}^{p+1}} \\\notag 
    & \omega_D(\bx, \by) = \omega(\by - \bx) 
\end{align}
where the subscript $D$ denotes the notation used in \cite{Du_Gunzburger_Lehoucq_Zhou_2013_MMMAS}, and the non-subscripted characters (on the right hand side) are used in the present work. 

We will abuse the notation slightly and use $\bfe$ in two contexts. When $\bfe$ appears in an integral over $H_{\bx}^\delta$, it is defined to be $\bfe = \frac{\bx - \xh}{\abs{\bx - \xh}}$. When $\bfe$ appears in an integral over $H_0^\delta$, then it is implied that $\bfe = \frac{\xh}{\abs{\xh}}$. We will use the following simplification in our proofs. Observe that, owing to the integrability of $\omega$, the divergence operator can be written in the convolutional form:
\begin{align}\label{divSimplified}
    \cD_\omega(\bu)(\bx)
    & = 
    \int_{H_{\bx}^\delta}\omega(\bx - \xh) (\bu - \hat{\bu})\cdot \bfe \, d\xh
    \\ & = \notag 
    - \int_{H_{\bx}^\delta} \hat{\bu} \omega(\bx - \xh) \cdot \bfe \, d\xh + \bu \cdot \int_{H_{\bx}^\delta}\omega(\bx - \xh) \bfe \, d\xh
    \\ & = \notag 
    - \int_{H_0^\delta} \bu(\bx - \xh) \cdot \omega(\xh)\bfe \, d\xh + \bu \cdot \int_{H_0^\delta} \omega(\xh) \bfe \, d\xh
    \\ & = \notag 
    - \int_{H_0^\delta} \bu(\bx - \xh) \cdot \omega(\xh)\bfe \, d\xh,
\end{align}
the last line due to the antisymmetry  of $\omega \bfe$.

In the forthcoming analysis, we will use the following result (see, e.g. \cite{GRAFAKOS_2014} and the references therein):

\begin{theorem}\label{HardyLittlewood}
Let locally integrable $f: \nR^n \to \nR$. Define the Hardy-Littlewood Maximal function by 
\begin{align}\label{HardyLittlewoodMaxFn}
    Mf(x) = \sup\limits_{\delta > 0}\frac{1}{\abs{H_x^\delta}} \int_{H_x^\delta} \abs{f(y)}dy
\end{align}
where $H_x^\delta$ is the $\delta$-ball centered at $x$, and $\abs{H_x^\delta}$ denotes the $n$-dimensional Lebesgue measure of $H_x^\delta$. Then, for $n \geq 1$, $1 < b \leq \infty$, and $f \in L^b(\nR^n)$, there exists a constant $C(b,n)>0$ such that 
\begin{align}\notag 
    \norm{Mf}_{L^b(\nR^n)} \leq C(b,n) \norm{f}_{L^b(\nR^n)}.
\end{align}
\end{theorem}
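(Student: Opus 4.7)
The plan is to combine the trivial $L^\infty$ endpoint with a weak-type $(1,1)$ inequality and invoke the Marcinkiewicz interpolation theorem. First, observe that the case $b=\infty$ is immediate: for every ball $H_x^\delta$,
\begin{align}\notag
    \frac{1}{\abs{H_x^\delta}}\int_{H_x^\delta}\abs{f(y)}\,dy \leq \norm{f}_{L^\infty},
\end{align}
so taking the supremum over $\delta>0$ yields $\norm{Mf}_{L^\infty}\leq\norm{f}_{L^\infty}$, which settles the endpoint with constant $1$.

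For $1<b<\infty$, the crux is the \emph{weak-type} $(1,1)$ inequality: there exists $C_n>0$ such that for every $\lambda>0$ and every $f\in L^1(\nR^n)$,
\begin{align}\notag
    \abs{\set{x\in\nR^n : Mf(x) > \lambda}} \leq \frac{C_n}{\lambda}\norm{f}_{L^1}.
\end{align}
To establish this, I would fix a compact subset $K$ of the superlevel set $E_\lambda:=\set{Mf>\lambda}$. For each $x\in K$ there exists a radius $\delta(x)>0$ with $\int_{H_x^{\delta(x)}}\abs{f}\,dy > \lambda\,\abs{H_x^{\delta(x)}}$. Since $\set{H_x^{\delta(x)}}_{x\in K}$ covers $K$, a Vitali-type selection lemma extracts a pairwise disjoint sub-collection $\set{B_j}$ whose threefold dilates still cover $K$. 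Then
\begin{align}\notag
    \abs{K} \leq \sum_j \abs{3 B_j} = 3^n \sum_j\abs{B_j} \leq \frac{3^n}{\lambda}\sum_j\int_{B_j}\abs{f}\,dy \leq \frac{3^n}{\lambda}\norm{f}_{L^1},
\end{align}
and letting $K$ exhaust $E_\lambda$ from below yields the claimed weak-type estimate with $C_n=3^n$.

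Equipped with a strong $(\infty,\infty)$ bound and a weak $(1,1)$ bound, I would invoke the Marcinkiewicz interpolation theorem to upgrade these two endpoint bounds into a strong $(b,b)$ bound for every intermediate $1<b<\infty$, producing a constant $C(b,n)$ depending only on $b$ and $n$. As expected, the interpolation constant deteriorates as $b\downarrow 1$, consistent with the well-known failure of $M$ to be bounded on $L^1$.

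The main obstacle is the Vitali covering step. The technical subtlety there is that the radii $\delta(x)$ need not be uniformly bounded, so before applying the finite Vitali selection lemma one must either truncate (restrict the supremum defining $Mf$ to $\delta\leq R$ and send $R\to\infty$ by monotone convergence) or work through a growing sequence of compact exhausting sets of $E_\lambda$. Once the weak-$(1,1)$ inequality is secured, the Marcinkiewicz interpolation is a black-box application and introduces no new difficulties.
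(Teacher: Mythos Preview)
Your proof sketch is correct and follows the standard textbook route (trivial $L^\infty$ bound, Vitali covering lemma to obtain the weak-type $(1,1)$ estimate, then Marcinkiewicz interpolation). However, the paper does not supply its own proof of this theorem: it is quoted as a known result with a reference to Grafakos, \emph{Classical Fourier Analysis}, so there is no in-paper argument to compare against. Your outline is precisely the proof one finds in that reference, so in effect you have reproduced the cited argument.
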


\section{Rates of Convergence}\label{sec:rates_of_convergence}

We prove convergence in $L^q$ for arbitrary dimension $n$.

\begin{remark}
Note that the the assumption that $u\in H^3$ gives us a convergence rate. However, to only show convergence, it is possible that we need only assume $u \in H^1(\nR)$ or $C^3(\nR)$, see for example 
\cite{Du_Gunzburger_Lehoucq_Zhou_2013_MMMAS} or \cite{HAAR_RADU_2022}.
\end{remark}

Let $\div_j u$ denote the j-th component of $\cD_\omega(\bu)$:
\begin{align}\label{div_components}
    \div_j u(\bx) = - \int_{H_0^\delta} u(\bx - \xh) \omega(\bx)e_j \, d\xh
\end{align}
where, as above, $e_j = \frac{\xh_j}{\abs{\xh}}$ for $j \in \set{1, \dots, n}$.

\begin{lemma}[Convergence of nonlocal operator components]\label{thm:convergence_rate}
Let $\bu: \nR^n \to \nR^n$ such that $\bu \in W^{3,q}$. Define $\omega$ as in \eqref{omega_form} (with $0 < p < n$) satisfying the normalization criterion
\begin{align}\label{second_moment_nD}
    \int_{H_0^\delta} \hat{x}_j \omega(\xh)e_j \, d\xh = 1. 
\end{align}
for $j = 1, \cdots, n$. Then, for $\delta > 0$ and $1 \leq q \leq \infty$,
\begin{align*}
    & \qquad \norm{\div_j u - \partial_{x_j} u}_{L^q(\nR^n)}
    \leq 
    c_0  \delta^{2} 
    \norm{u}_{W^{3,q}(\nR^n)}
\end{align*}
where $c_0=\frac{n (n - p + 1) \alpha_n}{12(n - p)}$.

\end{lemma}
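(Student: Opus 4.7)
The plan is to rewrite the pointwise error as a Taylor remainder inside the convolutional form \eqref{divSimplified}, identify which Taylor terms survive integration against $\omega(\xh) e_j$ by parity, and control the surviving third-order remainder through the Hardy--Littlewood maximal function. Starting from \eqref{divSimplified}, I would first use the antisymmetry of $\omega(\xh) e_j$ in $\xh$ (so that $\int_{H_0^\delta} \omega(\xh) e_j\, d\xh = 0$) to rewrite
\[
\div_j u(\bx) = \int_{H_0^\delta} \bigl(u(\bx) - u(\bx-\xh)\bigr)\,\omega(\xh)\,e_j\, d\xh.
\]
Applying Taylor's theorem with integral remainder to $g(t) := u(\bx - t\xh)$ between $t = 0$ and $t = 1$ gives
\[
u(\bx) - u(\bx - \xh) = \nabla u(\bx)\cdot\xh - \tfrac{1}{2}\sum_{i,k}\partial_{ik}u(\bx)\,\hat{x}_i\hat{x}_k + \tfrac{1}{2}\int_0^1(1-t)^2\sum_{i,k,l}\partial_{ikl}u(\bx - t\xh)\,\hat{x}_i\hat{x}_k\hat{x}_l\, dt.
\]

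Substituting this expansion, I would then isolate each term. The first-order contribution is $\sum_k \partial_k u(\bx)\int_{H_0^\delta} \hat{x}_k \omega(\xh) e_j\, d\xh$; the off-diagonal integrals ($k\neq j$) vanish by antisymmetry in $\hat{x}_k$, and the diagonal one ($k = j$) equals $1$ by the normalization \eqref{second_moment_nD}, so the first-order term reproduces exactly $\partial_{x_j} u(\bx)$. The second-order contribution integrates $\hat{x}_i \hat{x}_k \hat{x}_j / |\xh|$ against the radial weight $\omega$; since this monomial has odd total degree $3$, at least one coordinate appears to odd power, so the integral vanishes by parity on the symmetric ball $H_0^\delta$. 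Therefore $\div_j u(\bx) - \partial_{x_j} u(\bx)$ is precisely the third-order remainder
\[
R(\bx) := \tfrac{1}{2} \int_{H_0^\delta}\int_0^1 (1-t)^2 \sum_{i,k,l} \partial_{ikl} u(\bx - t\xh)\,\hat{x}_i\hat{x}_k\hat{x}_l\,\omega(\xh)\,e_j\, dt\, d\xh.
\]

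The remaining task is an $L^q$ bound on $R$. Pointwise, I would estimate $|\hat{x}_i\hat{x}_k\hat{x}_l\,e_j| \le |\xh|^3$ and use $\omega(\xh) = \omega_0/|\xh|^p$, reducing each inner integral to $\omega_0 \int_{H_0^\delta}|\partial_{ikl}u(\bx - t\xh)|\,|\xh|^{3-p}\, d\xh$. Bounding $|\xh|^{3-p} \le \delta^{3-p}$ on the ball and changing variables $\by = \bx - t\xh$ realizes the remaining mass integral as an average of $|\partial_{ikl}u|$ over $H_\bx^{t\delta}$, and is therefore dominated by $\alpha_n \delta^{n+3-p}\,M(\partial_{ikl}u)(\bx)$. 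Substituting $\omega_0 = (n-p+1)/(\alpha_n \delta^{n-p+1})$ then collapses the $\delta$ powers to $\delta^2$. For $1 < q \le \infty$, taking $L^q(\nR^n)$ norms and applying Theorem~\ref{HardyLittlewood} to each $M(\partial_{ikl} u)$ term yields the desired $C\delta^2\|u\|_{W^{3,q}}$ estimate.

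The main obstacle I anticipate is the $q = 1$ endpoint, where the Hardy--Littlewood operator fails to be bounded on $L^1$. There I would sidestep the maximal function entirely: starting from the pointwise absolute-value bound on $R$, I would apply Fubini to swap the $\bx$-integral with the $\xh$ and $t$ integrals, which absorbs the translation of $\partial_{ikl} u$ into $\|\partial_{ikl} u\|_{L^1}$ and leaves the explicit radial factor $\int_{H_0^\delta} |\xh|^{3-p}\, d\xh$, evaluated directly in polar coordinates. A secondary subtlety is the regime $p > 3$, where the crude bound $|\xh|^{3-p} \le \delta^{3-p}$ fails; this can be handled by integrating the radial variable $|\xh|$ before invoking the maximal function (via a layer-cake or decreasing-rearrangement argument) so that the ``inner'' mass is controlled on spheres of every radius $r < \delta$.
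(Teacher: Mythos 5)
Your proposal is essentially correct and follows the same skeleton as the paper's proof: rewrite $\div_j u$ via \eqref{divSimplified}, expand by Taylor's theorem with integral remainder, kill the zeroth- and second-order terms by antisymmetry/parity, recover $\partial_{x_j}u$ from the normalization \eqref{second_moment_nD}, and control the third-order remainder with the Hardy--Littlewood maximal function. Where you genuinely diverge is the remainder estimate. The paper applies H\"older's inequality on $H_0^\delta$ with conjugate exponents $a,b$, splitting $\xh^\alpha\omega(\xh)e_j\in L^a$ and $\partial^\alpha u(\bx-t\xh)\in L^b$, then invokes Theorem \ref{HardyLittlewood} for $M(\abs{\partial^\alpha u}^b)$ in $L^{q/b}$ and optimizes the $\delta$-power by sending $a\to1^+$; you instead bound the kernel pointwise by $\omega_0\abs{\xh}^{3-p}$, absorb the translated derivative into $M(\partial^\alpha u)(\bx)$ after the change of variables, and apply the maximal inequality directly in $L^q$, treating $q=1$ separately by Fubini. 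Your route is simpler and in fact dodges a delicate point of the paper's bookkeeping (there the maximal inequality is used in $L^{q/b}$ even as $b\to\infty$); the trade-off is that for $1<q<\infty$ your bound carries the maximal operator norm $C(q,n)$, so you obtain $C\delta^2\norm{u}_{W^{3,q}}$ but not the specific constant $c_0=\frac{n(n-p+1)\alpha_n}{12(n-p)}$ asserted in the lemma, which the paper extracts from its optimization in $a$. Two refinements would close your acknowledged gaps cleanly: (i) for $p>3$, rather than a layer-cake argument, use the standard fact that convolution with a nonnegative, radially decreasing, integrable kernel is dominated by its $L^1$-norm times $Mf$ --- or, simpler still, bound $\abs{\xh}^3\le\delta^3$ and take $\omega$ itself (radially decreasing for every $0<p<n$) as the majorant, which removes the case split and gives the explicit factor $\omega_0\delta^3\,\frac{n\alpha_n}{n-p}\delta^{n-p}=\frac{n(n-p+1)}{n-p}\delta^2$; (ii) your Fubini argument at $q=1$ actually extends to every $1\le q\le\infty$ via Minkowski's integral inequality and translation invariance of $\norm{\cdot}_{L^q(\nR^n)}$, which would make the maximal function unnecessary and yield a fully explicit constant for all $q$, at the cost of departing further from the paper's stated method.
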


\begin{proof}
To show the desired estimate, we will expand $u(\bx - \xh)$. Recall Taylor's Theorem in integral form (where we take $\xh\in H_\bx^\delta$):
\begin{align*}
    u(\bx - \xh) 
    & = 
    u(\bx) + \sum\limits_{1 \leq \abs{\alpha} \leq k} \frac{(-1)^{\abs{\alpha}}}{\alpha!} \lp \partial^\alpha u(\bx) \rp \xh^\alpha 
    \\ & \qquad + 
    \sum\limits_{\abs{\alpha} = k+1}\frac{k+1}{\alpha!}(-1)^{\abs{\alpha}}\xh^\alpha \int_0^1 (1-t)^k \lp \partial^\alpha u(\bx - t\xh) \rp \, dt.
\end{align*}
In particular, when $k = 2$:
\begin{align}\notag 
    u(\bx - \xh) 
    & = 
    u(\bx) 
    - 
    \sum\limits_{\abs{\alpha} = 1} \lp \partial^\alpha u(\bx) \rp \xh^\alpha 
    + 
    \sum\limits_{\abs{\alpha} = 2} \frac{1}{\alpha!}\lp \partial^\alpha u(\bx) \rp \xh^\alpha
    \\ & \qquad - \notag 
    \sum\limits_{\abs{\alpha} = 3} \frac{3}{\alpha!} \xh^\alpha \int_0^1 (1-t)^2 \lp \partial^\alpha u(\bx - t\xh) \rp \, dt
\end{align}
where $\bx^\alpha = x_1^{\alpha_1} x_2^{\alpha_2} \cdots x_n^{\alpha_n}$ is a scalar quantity. Hence, 
\begin{align} \label{tay_expand}
    \div_ju(\bx) 
    & = 
    \int_{H_0^\delta} u(\bx - \xh) \omega(\xh) e_j \, d\xh 
    \\ & = \notag 
    u(\bx) \int_{H_0^\delta} \omega(\bx) e_j \, d\xh 
    - 
    \sum\limits_{\abs{\alpha} = 1} \lp \partial^\alpha u(\bx) \rp \int_{H_0^\delta} \xh^\alpha \omega(\xh) e_j \, d\xh 
    \\ & \qquad + \notag 
    \sum\limits_{\abs{\alpha} = 2} \frac{1}{\alpha!}\lp \partial^\alpha u(\bx) \rp \int_{H_0^\delta} \xh^\alpha \omega(\xh) e_j \, d\xh 
    \\ & \qquad -\notag 
    \sum\limits_{\abs{\alpha} = 3} \frac{3}{\alpha!} \int_{H_0^\delta} \xh^\alpha \omega(\xh)e_j \int_0^1 (1-t)^2 \lp \partial^\alpha u(\bx - t\xh) \rp \, dt \, d\xh
\end{align}
Now, under assumption \eqref{second_moment_nD}, and using the anti-symmetry of $\omega e$, we see that
\begin{align}
    \sum\limits_{\abs{\alpha} = 1} \lp \partial^\alpha u(\bx) \rp \int_{H_0^\delta} \xh^\alpha \omega(\xh) e_j \, d\xh 
    & = \notag 
    \sum\limits_{i=1}^n \partial_{x_i} u(\bx) \int_{H_0^\delta} \hat{x}_i \omega(\xh) e_j \, d\xh
    \\ & = \notag 
    \partial_{x_j} u(\bx),
\end{align}
and when $\abs{\alpha}$ is even,
\begin{align} \notag 
    \int_{H_0^\delta} \omega(\xh) e_j \, d\xh = \int_{H_0^\delta} \xh^\alpha \omega(\xh) e_j \, d\xh = 0.
\end{align}
Applying this to \eqref{tay_expand}, we obtain
\begin{align} \notag 
    \div_j u(\bx) = -\partial_{x_j} u(\bx) - \sum\limits_{\abs{\alpha} = 3} \frac{3}{\alpha!} \int_{H_0^\delta} \xh^\alpha \omega(\xh)e_j \int_0^1 (1-t)^2 \lp \partial^\alpha u(\bx - t\xh) \rp \, dt \, d\xh
\end{align}
Adding $\partial_{x_j}u(\bx)$ to both sides:
\begin{align}\label{L2_norm_ndiv}
    & \qquad \norm{\div_ju - \partial_{x_j}u}_{L^q(\nR^n)}^q
    \\ & = \notag 
    \int_{\nR^n} \abs{ \sum\limits_{\abs{\alpha} = 3} \frac{3}{\alpha!} \int_{H_0^\delta} \xh^\alpha \omega(\xh)e_j \int_0^1 (1-t)^2 \lp \partial^\alpha u(\bx - t\xh) \rp \, dt \, d\xh }^q \, d\bx.
\end{align}
Now, to estimate \eqref{L2_norm_ndiv}, we begin by applying Fubini's Theorem and H\"older's inequality with $\frac{1}{a} + \frac{1}{b} = 1$ for some $a\geq 1$ and $b>1$ to be chosen such that $a < n/p$:
\begin{align*}
    & \qquad \int_{\nR^n} \abs{ \sum\limits_{\abs{\alpha} = 3} \frac{3}{\alpha!} \int_{H_0^\delta} \xh^\alpha \omega(\xh)e_j \int_0^1 (1-t)^2 \lp \partial^\alpha u(\bx - t\xh) \rp \, dt \, d\xh }^q \, d\bx
    \\ & = 
    \int_{\nR^n} \abs{ \sum\limits_{\abs{\alpha} = 3} \frac{3}{\alpha!} \int_0^1 (1-t)^2\int_{H_0^\delta} \xh^\alpha \omega(\xh)e_j \lp \partial^\alpha u(\bx - t\xh) \rp \, d\xh \, dt }^q \, d\bx
    \\ & \leq 
    \int_{\nR^n} \abs{ \sum\limits_{\abs{\alpha} = 3} \frac{3}{\alpha!} \int_0^1 (1-t)^2 \norm{\xh^\alpha \omega(\xh)e_j}_{L^a(H_0^\delta)}  \norm{\partial^\alpha u(\bx - t\xh)}_{L^b(H_0^\delta)} \, dt }^q \, d\bx.
\end{align*}
Since $\abs{e_j} \leq 1$ and $\xh \in H_0^\delta$
\begin{align}\label{omega_estimate}
    \norm{\xh^\alpha \omega e_j}_{L^a} \leq \delta^3\norm{\omega}_{L^a}.
\end{align}
Now, estimating $\norm{\omega}_{L^a}$ (\textit{This is equality with our choice of $\omega$}):
\begin{align}\notag 
    \norm{\omega}_{L^a}^a \leq \omega_0^a \int_{H_0^\delta} \abs{\frac{1}{\abs{\xh}^p}}^a \, d\xh = \omega_0^a \int_0^\delta \int_{\partial B(0;r)} \frac{r^{n-1}}{r^ap} \, d\sigma \, dr = \omega_0^a \frac{n \alpha_n}{n - ap}\delta^{n - ap}.
\end{align}
Applying \eqref{omega_0}, the estimate \eqref{omega_estimate} becomes
\begin{align}\label{omega_estimate_2}
    \norm{\xh^\alpha \omega e_j}_{L^a}^a \leq \frac{n\alpha_n (n-p + 1)^a}{n-ap}\delta^{3 + n-an-a}=: K_\delta 
\end{align}
Next, estimating $\norm{\partial^\alpha u(\bx - t(\cdot))}_{L^b(H_0^\delta))}$: for this estimate, in interest of using the Hardy-Littlewood Maximal Inequality \eqref{HardyLittlewood}, we use a change of variables with $z = \bx - t\xh$
\begin{align}
    \norm{\partial^\alpha u(\bx - t(\cdot))}_{L^b(H_0^\delta))}^b
    & = \notag 
    \int_{H_0^\delta} \abs{\partial^\alpha u(\bx - t\xh)}^b \, d\xh 
    \\ & = \notag 
    \alpha_n \delta_n t^{3b-n} \frac{1}{\alpha_n \delta^n} \int_{H_x^{\delta t}} \abs{\partial^\alpha u(z)}^b \, dz
    \\ & \leq \notag 
    \alpha_n \delta^n t^{3b-n} M(\abs{\partial^\alpha u(\bx)}^b)
\end{align}
Thus we compute
\begin{align*}
    & \qquad \norm{\div_j u - \partial_{x_j} u}_{L^q(\nR^n)}^q 
    \\ & \leq 
    \int_{\nR^n} \abs{ \sum\limits_{\abs{\alpha} = 3} \frac{3}{\alpha!} \int_0^1 (1-t)^2 \norm{\xh^\alpha \omega(\xh)e_j}_{L^a(H_0^\delta)}  \norm{\partial^\alpha u(\bx - t\xh)}_{L^b(H_0^\delta)} \, dt }^q \, d\bx
    \\ & \leq 
    K_\delta^q \int_{\nR^n} \abs{ \sum\limits_{\abs{\alpha} = 3} \frac{3}{\alpha!} \int_0^1 (1-t^2) \lp \alpha_n \delta^n t^{3b-n} M(\abs{\partial^\alpha u}^b) \rp^{1/b} \, dt}^q \, d\bx 
    \\ & \leq 
    K_\delta^q \alpha_n^{q/b} \delta^{qn/b} \abs{\frac{2b^2}{24b^2 - 10bn + n^2}}^q 
    \sum\limits_{\abs{\alpha} = 3} \int_{\nR^n} \lp M(\abs{\partial^\alpha u}^b ) \rp ^{q/b} \, d\bx
    \\ & \leq 
    K_\delta^q \alpha_n^{q/b} \delta^{qn/b} \abs{\frac{2b^2}{24b^2 - 10bn + n^2}}^q
    \sum\limits_{\abs{\alpha} = 3} \norm{ M(\abs{\partial^\alpha u}^b )}_{L^{q/b}}^{b/q}
    \\ & \leq 
    K_\delta^q  \alpha_n^{q/b} \delta^{qn/b} \abs{\frac{2b^2}{24b^2 - 10bn + n^2}}^q 
    \sum\limits_{\abs{\alpha} = 3} \norm{\abs{\partial^\alpha u}^b }_{L^{q/b}}^{b/q}
    \\ & \leq 
    \lp \frac{2b^2 n (n - p + 1)^a \alpha_n^{1/b + 1}}{(24b^2 - 10bn + n^2)(n - ap)} \rp^q \delta^{q\lp 3 + n + \frac{n}{b} - an - a \rp} \norm{u}_{W^{3,q}(\nR^n)}^q.
\end{align*}
Optimizing the power of $\delta^q$ (i.e., $f(a):=3 + n + \frac{n}{b} - an - a$, where $\frac{1}{a} + \frac{1}{b} = 1$) it is easy to show that $f''(a)<0$ for all $a > 0$ and the unique maximum occurs at $a = \sqrt{\frac{n}{n+1}}<1$; 
hence the maximum does not occur on the interval $1\leq a \leq \infty$, and we can thus optimize the right-hand side by sending $a\rightarrow1^+$ (and hence $b\rightarrow\infty$).  This yields
\begin{align*}
    & \qquad \norm{\div_j u - \partial_{x_j} u}_{L^q(\nR^n)}^q 
    \leq 
    \lp \frac{n (n - p + 1) \alpha_n}{12(n - p)} \rp^q \delta^{2q} 
    \norm{u}_{W^{3,q}(\nR^n)}^q.
\end{align*}
Taking the $q^{\text{th}}$ root yields the desired result in the case of $1 \leq q < \infty$. 

The $L^\infty$ case is similar, but we provide the details for the convenience of the reader. We estimate 
\begin{align*}
    & \qquad \abs{\div_j u(\bx) - \partial_{x_j} u(\bx)}
    \\ & \leq 
    \abs{ \sum\limits_{\abs{\alpha} = 3} \frac{3}{\alpha!} \int_0^1 (1-t)^2 \norm{\xh^\alpha \omega(\xh)e_j}_{L^a(H_0^\delta)}  \norm{\partial^\alpha u(\bx - t\xh)}_{L^b(H_0^\delta)} \, dt }
    \\ & \leq 
    K_\delta \abs{ \sum\limits_{\abs{\alpha} = 3} \frac{3}{\alpha!} \int_0^1 (1-t^2) \lp \alpha_n \delta^n t^{3b-n} M(\abs{\partial^\alpha u}^b) \rp^{1/b} \, dt}
    \\ & \leq 
    K_\delta \alpha_n^{1/b} \delta^{n/b} \abs{\frac{2b^2}{24b^2 - 10bn + n^2}}
    \sum\limits_{\abs{\alpha} = 3}  \lp M(\abs{\partial^\alpha u}^b ) \rp ^{1/b}
    \\ & \leq 
    K_\delta \alpha_n^{1/b} \delta^{n/b} \abs{\frac{2b^2}{24b^2 - 10bn + n^2}}
    \sum\limits_{\abs{\alpha} = 3} \norm{ M(\abs{\partial^\alpha u}^b )}_{L^\infty}^{1/b}
    \\ & \leq 
    K_\delta  \alpha_n^{1/b} \delta^{n/b} \abs{\frac{2b^2}{24b^2 - 10bn + n^2}} 
    \sum\limits_{\abs{\alpha} = 3} \norm{\partial^\alpha u }_{L^{\infty}}
    \\ & \leq 
    \lp \frac{2b^2 n (n - p + 1)^a \alpha_n^{1/b + 1}}{(24b^2 - 10bn + n^2)(n - ap)} \rp \delta^{\lp 3 + n + \frac{n}{b} - an - a \rp} \norm{u}_{W^{3,\infty}(\nR^n)}.
\end{align*}
Taking the supremum in $x$ and then optimizing the power of $\delta$ as above yields the desired result. 
\end{proof}

With this lemma in hand, Theorem \ref{thm:div_conv} immediately follows by applying the lemma to each component and summing as necessary.

\section{Acknowledgments}
The authors thank Michael Pieper, Mikil Foss, and Petronela Radu for useful conversations during the preparation of this manuscript.  
A.L. and I.S. were supported in part by NSF grants DMS-2206762 and CMMI-1953346.  A.L was also supported by USGS  grant G23AS00157 number GRANT13798170.
\begin{scriptsize}

\end{scriptsize}

\end{document}